\newtheorem{proposition}{Proposition}
\newtheorem{theorem}{Theorem}
\newtheorem{claim}{Claim}
\newtheorem{lemma}{Lemma}
\begin{document}

\begin{frontmatter}


\title{Convex geometries over induced paths with bounded length}

\author[unlp]{Marisa Gutierrez}
\ead{marisa@mate.unlp.edu.ar}

\author[uff]{F\'abio Protti\corref{cor1}}
\ead{fabio@ic.uff.br}

\author[unlp]{Silvia B. Tondato}
\ead{tondato@mate.unlp.edu.ar}

\cortext[cor1]{Corresponding author}

\address[unlp]{Departamento de Matem\'atica, Facultad de Ciencias Exactas\\ Universidad Nacional de La Plata, Argentina\\
\hspace*{1cm}}
\address[uff]{Instituto de Computa\c c\~ao\\
Universidade Federal Fluminense, Niter\'oi, Brazil}


\begin{abstract}

Graph convexity spaces have been studied in many contexts. In particular, some studies are devoted to determine if a graph equipped with a convexity space is a {\em convex geometry}. It is well known that chordal and Ptolemaic graphs can be characterized as convex geometries with respect to the geodesic and monophonic convexities, respectively. Weak polarizable graphs, interval graphs, and proper interval graphs can also be characterized in this way. In this paper we introduce the notion of {\em $l^k$-convexity}, a natural restriction of the monophonic convexity. Let $G$ be a graph and $k\geq 2$ an integer. A subset $S\subseteq V(G)$ is \textit{$l^k$-convex} if and only if for any pair of vertices $x,y$ of $S$, each induced path of length {\em at most} $k$ connecting $x$ and $y$ is completely contained in the subgraph induced by $S$. The {\em $l^k$-convexity} consists of all $l^k$-convex subsets of $G$. In this work, we characterize {\em $l^k$-convex geometries} (graphs that are convex geometries with respect to the $l^k$-convexity) for $k\in\{2,3\}$. We show that a graph $G$ is an $l^2$-convex geometry if and only if $G$ is a chordal $P_4$-free graph, and an $l^3$-convex geometry if and only if $G$ is a chordal graph with diameter at most three such that its induced gems satisfy a special ``solving'' property. As far as the authors know, the class of $l^3$-convex geometries is the first example of a non-hereditary class of convex geometries.

\end{abstract}

\begin{keyword}
chordal graph \sep convexity \sep convex geometry
\end{keyword}

\end{frontmatter}

\journal{Discrete Applied Mathematics}

\section{Introduction}
\label{sec:intro}

A family $\mathcal{C}$ of subsets of a nonempty set $V$ is called a \textit{convexity} on $V$ if:

\begin{itemize}
\item $(C1)$ $\emptyset, V \in \mathcal{C}$.

\item $(C2)$ $\mathcal{C}$ is stable for intersections, that is, if $\mathcal{D}$ is a non-empty subfamily of $\mathcal{C}$ then $\bigcap \mathcal{D}$ is a member of $\mathcal{C}$.

\item $(C3)$ $\mathcal{C}$ is stable for nested unions, that is, if $\mathcal{D}$ is a non-empty and totally ordered by inclusion subfamily of $\mathcal{C}$ then $\bigcup \mathcal{D}$ is a member of $\mathcal{C}$.

\end{itemize}

A \textit{convexity space} is an ordered pair $(V, \mathcal{C})$, where $V$ is a nonempty set and $\mathcal{C}$ is a convexity on $V$. The members of $\mathcal{C}$ are called \textit{convex sets}. A \textit{graph convexity space} is an ordered pair $(G, \mathcal{C})$ formed by a connected graph $G$ and a convexity $\mathcal{C}$ on $V(G)$ such that $(V(G),\mathcal{C})$ is a convexity space.





In the last few decades, convexity spaces and graph convexity spaces have been studied in many contexts~\cite{farber-jamison,pelayo,van-de-vel}. In particular, some studies are devoted to determine if a graph equipped with a convexity space is a {\em convex geometry}. The concept of convex geometry is related to the concepts of {\em convex hull} and {\em extreme point}. We refer the reader to~\cite{farber-jamison}. Let $(V, \mathcal{C})$ be a convexity space. Given a set $S \subseteq V$, the smallest convex set containing $S$ is called the \textit{convex hull} of $S$. An element $x$ of a convex set $S$ is an \textit{extreme point} of $S$ if $S\backslash\{x\}$ is also convex. The convexity space $(V, \mathcal{C})$ is said to be a \textit{convex geometry} if it satisfies the so-called \textit{Minkowski-Krein-Milman} property~\cite{krein-milman}:

\medskip

\centerline{{\em Every convex set is the convex hull of its extreme points.}}

\medskip

Let $(G,\mathcal{C})$ be a graph convexity space. There are many examples in the literature where the convexity $\mathcal{C}$ is defined over a path system. For example, the {\em monophonic convexity}~\cite{dourado-et-al,duchet} consists of all the monophonically convex sets of $V(G)$ (a set $S$ is {\em monophonically convex} if and only if every {\em induced} path between two vertices of $S$ lies entirely in the subgraph induced by $S$). Likewise, the {\em geodesic}, $m^3$-, {\em toll}, and {\em weakly toll} convexities are defined over shortest paths, induced paths of length at least three, tolled walks~\cite{alcon-et-al}, and weakly tolled walks~\cite{gutierrez-tondato}.

Chordal and Ptolemaic graphs have been characterized as convex geometries with respect to the monophonic convexity and the geodesic convexity, respectively \cite{farber-jamison}. Similarly, weak polarizable graphs~\cite{olariu} have been characterized  as convex geometries with respect to the $m^3$-convexity~\cite{dragan-et-al}; interval graphs have been characterized as convex geometries with respect to the toll convexity~\cite{alcon-et-al}; and proper interval graphs have been characterized as convex geometries with respect to the weakly toll convexity~\cite{gutierrez-tondato}.

All the above-mentioned classes are hereditary for induced subgraphs. The natural question that arises is whether every convex geometry with respect to a path system defines a hereditary class of graphs. In this work we answer negatively to this question.

Inspired by the studies of Dragan, Nicolai, and Brandst\"adt in~\cite{dragan-et-al},  in this paper we introduce the notion of $l^k$-convexity. Let $k\geq 2$ be an integer. A subset $S\subseteq V(G)$ is called \textit{$l^k$-convex} if and only if for any pair of vertices $x,y$ of $S$, each induced path of length {\em at most} $k$ connecting $x$ and $y$ is completely contained in the subgraph induced by $S$. The {\em $l^k$-convexity} is the convexity consisting of all the $l^k$-convex sets of a graph $G$. If $G$ is a convex geometry with respect to the $l^k$-convexity, we say that $G$ is an {\em $l^k$-convex geometry}.

The main contribution of this work is to provide characterizations of $l^k$-convex geometries for $k\in\{2,3\}$. We show that a graph $G$ is an $l^2$-convex geometry if and only if $G$ is trivially perfect, or, equivalently, a chordal $P_4$-free graph~\cite{golumbic}. We also show that $G$ is an $l^3$-convex geometry if and only if $G$ is chordal, $\mathit{diam}(G)\leq 3$, and its induced gems with at least six vertices satisfy a special ``solving'' property, in the sense that there must be some external structures preventing such gems from being obstacles for $G$ to be an $l^3$-convex geometry. Interestingly, we show that $l^3$-convex geometries do not form a hereditary class of graphs. As far as the authors know, this is the first example of a non-hereditary class of convex geometries.

The paper is organized as follows. Section 2 contains the necessary background. In Section 3, we prove that $l^2$-convex geometries are precisely the chordal $P_4$-free graphs. In Section 4, we describe a characterization of the class of $l^3$-convex geometries and show that such a class is not hereditary. Section 5 contains our conclusions.

\section{Preliminaries}
\label{sec:prelim}

All the graphs in this paper are finite, undirected, simple, and connected. Let $G$ be a graph. An \textit{induced path} $P$ in a $G$ is a sequence of vertices $x_0,\ldots,x_p$ such that $x_ix_j \in E(G)$ if and only if $i=j-1$, $j=1,\ldots,p$. The length of $P$ is $p$, which is the number of edges of $P$. We denote by $P_n$ the induced path with $n$ vertices. An \textit{induced cycle} $C$ is a sequence of vertices $x_0,\ldots,x_p$ such that: (i) $x_0=x_p$ and (ii) $x_ix_j \in E(G)$ if and only if $\{i,j\}=\{0,p-1\}$ or $|i-j|=1$. We denote by $C_n$ the induced cycle with $n$ vertices.

If $P=x_0,\ldots,x_p$ is a path, $P[x_i,x_j] \ (0\leq i\leq j\leq p)$ denotes the path $P'= x_i,x_{i+1},\ldots,x_{j-1},x_j$.

The \textit{distance} $d_G(u,v)$ between two vertices $u,v$ is the minimum number of edges in a path connecting these vertices. The \textit{diameter} of $G$, denoted by $\mathit{diam}(G)$, is the maximum distance between two vertices of $G$.

The neighborhood (resp., closed neighborhood) of $x\in V(G)$ is denoted by $N(x)$ (resp., $N[x]$).

Let $S\subseteq V(G)$. We denote by $G[S]$ the subgraph of $G$ induced by $S$. A \emph{clique} in $G$ is a set of pairwise adjacent vertices. We denote by $\mathcal{C}(G)$ the family of all maximal cliques of $G$. A vertex $x\in V(G)$ is a {\em simplicial vertex} if $N[x]$ is a clique.

Let $xy$ be an edge of $G$ and $z,w$ be two nonadjacent vertices of $G$. The graph $G-xy+zw$ is obtained from $G$ by deleting the edge $xy$ and adding the edge $zw$.

We say that $G$ {\em contains} a graph $H$ if $H$ is an induced subgraph of $G$. In addition, $G$ is {\em $H$-free} if $G$ does not contain $H$.

A vertex $u$ is a {\em universal vertex} if $u$ is adjacent to every other vertex of the graph. A \textit{gem} is a graph $G_n$ such that: (i)  $V(G_n)=\{x_0,\ldots,x_n,u_n\}$ ($n\geq 3$); (ii) $x_0,\ldots,x_n$ is an induced path; (iii) $u_n$ is a universal vertex. We also say that $G_n$ is an {$n$-gem}, to mean that the induced path $x_0,x_1,\ldots,x_n$ contains $n$ edges. See Figure~\ref{fig0}.

\begin{figure}[ht]
\tikzstyle{miEstilo}= [thin, dotted]
\begin{center}
\begin{tikzpicture}[scale=0.8]
\node[draw,circle] (1) at (-4,0) {$x_0$};
 \node[draw,circle] (2) at (-2,0) {$x_1$};
  \node[draw,circle] (3) at (2,0) {$x_n$};
   \node[draw,circle] (4) at (0,2) {$u_n$};
   \draw[miEstilo]  (2)--(3);
   \draw (1)--(2);
   \draw (2)--(4)--(3);
   \draw (4)--(1);
   \draw[miEstilo] (4)--(-1,0);
    \draw[miEstilo] (4)--(0,0);
     \draw[miEstilo] (4)--(1,0);
\end{tikzpicture}\end{center}
\caption{An $n$-gem.} \label{fig0}
\end{figure}
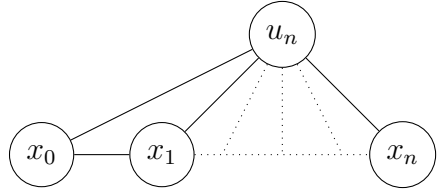

For $u,v\in V(G)$, the \textit{$l^k$-interval} $I_{l^k}[u,v]$ consists of $u,v$ together with all vertices lying in some induced path between $u$ and $v$ whose length is at most $k$. A subset $S$ of vertices is called \textit{$l^k$-convex} if and only if for every pair $u,v \in S$ it holds that $I_{l^k}[u,v] \subseteq S$. Note that the empty set, the whole vertex set, and cliques (including singletons) are all $l^k$-convex sets. Let $\mathcal{C}$ be the family of all the $l^k$-convex sets of $V(G)$. It is not difficult to see that $(G,\mathcal{C})$ is a graph convexity space. The family $\mathcal{C}$ is called {\em $l^k$-convexity}.

For $W\subseteq V(G)$, we define $I_{l^k}[W]=\cup_{u,v\in W} I_{l^k}[u,v]$. Also, we define $I^j_{l^k}[W]$ recursively as follows: $I^0_{l^k}[W]=W$ and $I^j_{l^k}[W]=I[I^{j-1}_{l^k}[W]]$ for $j\geq 1$.

If $x\in I_{l^k}^m[W]$ for some $m\geq 0$, we say that $x$ is {\em captured} by $W$.

The \textit{$l^k$-convex hull} of $S \subseteq V(G)$, denoted by $\mathit{hull}_{l^k}(S)$, is the smallest set of vertices of $G$ that contains $S$ and is $l^k$-convex. Alternatively, it is the intersection of all $l^k$-convex sets of $G$ that contain $S$. It can be easily shown that $\mathit{hull}_{l^k}(S)=I^j_{l^k}[S]$ for some integer $j\geq 0$; in fact, $j$ can be taken as the minimum index for which $I^j_{l^k}[S]=I^{j+1}_{l^k}[S]$.


A vertex $x$ of an $l^k$-convex set $S\subseteq V(G)$ is an \textit{extreme point} of $S$ if $S\backslash\{x\}$ is also an $l^k$-convex set of $G$. The set of all the extreme points of $S$ is denoted by $\mathit{Ext}_{l^k}(S)$. Clearly, not every $l^k$-convex set contains extreme points. Also, note that a vertex $x$ is an extreme point of an $l^k$-convex set $S$ $(k\geq 2)$ if and only if $x$ is a simplicial vertex in $G[S]$.

Let $W$ be a subset of vertices of a graph $G$, and let $u$ be a vertex in $I_{l^2}[W]\setminus W$. Then there exist vertices $x,y\in W$ such that $xy\notin E(G)$ and $xu,uy\in E(G)$. This implies that $u$ is not a simplicial vertex in the subgraph induced by $I_{l^2}[W]$. In fact, it is not difficult to see that, for any $j$, the only simplicial vertices in the subgraph induced by $I^j_{l^2}[W]$ are those in $W$ (if any). This also holds for $I^j_{l^3}[W]$, $j\geq 0$. Thus:

\begin{proposition}\label{prop:simplicial}
Let $G$ be a graph, $W\subseteq V(G)$, and $k\in\{2,3\}$. Then if $u$ is an extreme point of $\mathit{hull}_{l^k}(W)$ then $u$ is a simplicial vertex in $G[W]$.
\end{proposition}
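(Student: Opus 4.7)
The plan is to formalize the two observations already stated informally in the paragraph preceding the proposition. Being an extreme point of $\mathit{hull}_{l^k}(W)$ means, by definition, that $\mathit{hull}_{l^k}(W)\setminus\{u\}$ is $l^k$-convex, so no induced path of length at most $k$ joining vertices of $\mathit{hull}_{l^k}(W)\setminus\{u\}$ can pass through $u$. In particular (taking paths of length $2$), $u$ cannot have two non-adjacent neighbors inside $\mathit{hull}_{l^k}(W)$, so $u$ is simplicial in $G[\mathit{hull}_{l^k}(W)]$. Because $W\subseteq\mathit{hull}_{l^k}(W)$, the neighborhood of $u$ in $G[W]$ is a subset of the (clique) neighborhood in $G[\mathit{hull}_{l^k}(W)]$, and hence is also a clique. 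Thus, once $u\in W$ is established, simpliciality in $G[W]$ follows immediately.

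The main step is therefore to show $u\in W$. I would proceed by induction on the least integer $j\geq 0$ with $u\in I^{j}_{l^k}[W]$, using the characterization $\mathit{hull}_{l^k}(W)=I^{j}_{l^k}[W]$ recalled earlier. The base case $j=0$ is immediate. For $j\geq 1$ we have $u\in I_{l^k}[I^{j-1}_{l^k}[W]]\setminus I^{j-1}_{l^k}[W]$, so there exist $a,b\in I^{j-1}_{l^k}[W]$ and an induced path $P:a=x_0,x_1,\ldots,x_p=b$ of length $p\leq k\leq 3$ with $u=x_i$ for some $1\leq i\leq p-1$. Then $x_{i-1}$ and $x_{i+1}$ are neighbors of $u$ lying in $\mathit{hull}_{l^k}(W)$, and they are non-adjacent because $P$ is an induced path. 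This contradicts the simpliciality of $u$ in $G[\mathit{hull}_{l^k}(W)]$ established in the first step, closing the induction.

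There is no real obstacle; the only point that deserves care is why the argument is restricted to $k\in\{2,3\}$. The induction works uniformly for these two values because any vertex freshly produced by one application of $I_{l^k}$ is necessarily an \emph{internal} vertex of an induced path of length $\leq 3$, and such an internal vertex automatically has two non-adjacent neighbors in the hull. For $k\geq 4$ this mechanism would need to be revisited, but for the statement at hand the indices $k=2$ and $k=3$ are handled by a single uniform case analysis, so assembling the two steps above yields the proposition.
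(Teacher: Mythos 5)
Your proof is correct and follows essentially the same route as the paper, which only sketches the argument in the paragraph preceding the proposition: an extreme point of the hull is simplicial in the induced subgraph on the hull, any vertex added by an application of $I_{l^k}$ is an internal vertex of an induced path of length at most $k$ and hence has two nonadjacent neighbors in the hull, so extreme points must lie in $W$, and simpliciality in $G[W]$ then follows by monotonicity of neighborhoods. Your final caveat about $k\geq 4$ is unnecessary (an internal vertex of an induced path of any length has two nonadjacent neighbors inside the hull, so the same mechanism works for all $k\geq 2$), but this does not affect the correctness of the proof for $k\in\{2,3\}$.
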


A graph $G$ is an {\em $l^k$-convex geometry} if, for every $l^k$-convex set $S$ of $G$, it holds that $\mathit{hull}_{l^k}(\mathit{Ext}_{l^k}(S))=S$.

For $a\in\{g,m,m^3,t,wt\}$, we similarly define {\em a-convexity}, {\em a-convex set}, $I^k_a(S)$, $\mathit{hull}_a(S)$, $\mathit{Ext}_a(S)$, and {\em $a$-convex geometry}, according to Table~\ref{tab:conv}.

\begin{table}[htbp]
\begin{center}
\begin{tabular}{|c|l|}
\hline
{\em symbol} & {\em associated path system}\\ \hline
$g$    & shortest (geodesic) paths~\cite{batten}\\
$m$    & induced (monophonic) paths~\cite{dourado-et-al,duchet}\\\
$m^3$  & induced paths of length at least three~\cite{dragan-et-al}\\
$t$    & tolled walks~\cite{alcon-et-al}\\
$wt$   & weakly tolled walks~\cite{gutierrez-tondato}\\ \hline
\end{tabular}
\caption{Some convexities associated with path systems}\label{tab:conv}

\end{center}
\end{table}

The subscript $a\in\{g,m,m^3,t,wt,l^k\}$ can be dropped from the notation when the path system under consideration is clear from the context.

Some important classes of graphs have been characterized as $a$-convex geometries, for $a$ in Table~\ref{tab:conv}. Chordal graphs and Ptolemaic graphs are the $m$- and $g$-convex geometries, respectively~\cite{farber-jamison}. Interval and proper interval graphs are the $t$- and $wt$-convex geometries, respectively~\cite{alcon-et-al,gutierrez-tondato}. Finally, weakly polarizable graphs are the $m^3$-convex geometries~\cite{dragan-et-al}.

All the above-mentioned classes of graphs are hereditary for induced subgraphs. As we shall see, this is not the case for $l^3$-convex geometries.

\section{A characterization of $l^2$-convex geometries}

In this section we prove that $l^2$-convex geometries are precisely the chordal $P_4$-free graphs, also referred as $C_4$-free cographs or trivially perfect graphs~\cite{golumbic} in the literature.

\begin{theorem}\label{thm:l2}
A graph $G$ is an $l^2$-convex geometry if and only if $G$ is a chordal $P_4$-free graph.
\end{theorem}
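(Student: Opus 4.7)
The plan is to prove both directions separately, using Proposition~\ref{prop:simplicial} as the main tool for the necessity direction and induction on the cardinality of a convex set for the sufficiency direction.

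For necessity ($\Rightarrow$), I will prove the contrapositive: if $G$ contains either an induced $C_n$ ($n\geq 4$) or an induced $P_4$, then $G$ is not an $l^2$-convex geometry. For an induced cycle $C_n = x_0x_1\cdots x_{n-1}x_0$, I will take $W = \{x_0, x_2, x_4, \ldots\}$. Iterated $l^2$-closure pulls in each odd-indexed $x_{2i+1}$ as a common neighbor of $x_{2i}$ and $x_{2i+2}$, so $\mathit{hull}_{l^2}(W) \supseteq V(C_n)$. Proposition~\ref{prop:simplicial} restricts the extreme points of the hull to lie in $W$, but each $x_{2i}\in W$ has the nonadjacent pair $x_{2i-1}, x_{2i+1}$ among its neighbors in the hull (a nonedge since $C_n$ is induced); hence $\mathit{Ext}_{l^2}(\mathit{hull}_{l^2}(W))=\emptyset$ and the MKM property fails. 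If $G$ is chordal but contains an induced $P_4=abcd$, I will consider $S:=\mathit{hull}_{l^2}(\{a,b,d\})$. Then $c\in S$ (as a common neighbor of the nonadjacent pair $b,d$), and by Proposition~\ref{prop:simplicial} we have $\mathit{Ext}_{l^2}(S)\subseteq\{a,b,d\}$. The vertex $b$ is excluded since $a,c\in N_G(b)\cap S$ with $ac\notin E(G)$, so $\mathit{Ext}_{l^2}(S)\subseteq\{a,d\}$. A case analysis on the adjacencies to $b$ and $c$ of any $u\in N(a)\cap N(d)$ forces, via chordality, that $u$ is universal to $\{a,b,c,d\}$ (otherwise an induced $C_4$ or $C_5$ appears) and that $N(a)\cap N(d)$ is a clique (otherwise $\{a,u_1,c,u_2\}$ induces a $C_4$); hence $\mathit{hull}_{l^2}(\{a,d\})=\{a,d\}\cup(N(a)\cap N(d))$, a set that does not contain $b$. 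Therefore $\mathit{hull}_{l^2}(\mathit{Ext}_{l^2}(S))\not\ni b$, while $b\in S$, violating MKM.

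For sufficiency ($\Leftarrow$), I will assume $G$ is chordal and $P_4$-free (equivalently, trivially perfect) and show by induction on $|S|$ that every $l^2$-convex set $S$ satisfies $\mathit{hull}_{l^2}(\mathit{Ext}_{l^2}(S))=S$. The induced subgraph $G[S]$ is again trivially perfect. If $G[S]$ is a clique, every vertex is simplicial and the claim is immediate. Otherwise, let $K$ be the (possibly empty) clique of universal vertices of $G[S]$; the trivially perfect structure forces $G[S\setminus K]$ to be disconnected, say with components $T_1,\ldots,T_k$, $k\geq 2$. I will verify that each $T_j\cup K$ is $l^2$-convex in $G$ (common neighbors in $G$ of a nonadjacent pair in $T_j$ lie in $S$ by convexity of $S$ and cannot lie in another $T_i$, since there are no cross-component edges) and strictly smaller than $S$; by induction, $\mathit{hull}_{l^2}(\mathit{Ext}_{l^2}(T_j\cup K))=T_j\cup K$. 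Picking a simplicial vertex $v_j$ of each $G[T_j]$ (chordality), which remains simplicial in $G[S]$, every $u\in K$ is a common neighbor of the nonadjacent pair $v_1,v_2$, so $K\subseteq\mathit{hull}_{l^2}(\mathit{Ext}_{l^2}(S))$; this captures each $T_j\cup K$ inside $\mathit{hull}_{l^2}(\mathit{Ext}_{l^2}(S))$ and recovers $S$.

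The main obstacle I anticipate is the $P_4$ case of necessity, where the case analysis ruling out non-universal common neighbors of $a$ and $d$ (via forbidden induced cycles $C_4$ and $C_5$) requires some care. The sufficiency direction is delicate in checking that each $T_j\cup K$ inherits $l^2$-convexity from $G$, not merely from $G[S]$, but this follows cleanly once the component decomposition is established.
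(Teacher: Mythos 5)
Your proposal is correct, but it takes a partly different route from the paper, most notably in the sufficiency direction. On necessity, the chordality step is essentially the paper's argument (the hull of an induced cycle has no extreme points; the paper simply starts from $W=V(C)$ instead of the even-indexed vertices, which is a harmless variation), while your $P_4$ step is more local: you exhibit the failing convex set $\mathit{hull}_{l^2}(\{a,b,d\})$ for one fixed induced path $a,b,c,d$ and show $b\notin \mathit{hull}_{l^2}(\{a,d\})$, whereas the paper takes a maximum induced path between two simplicial vertices of $G$ (via Dirac) and shows its hull is not generated by its two extreme points; both arguments rest on the same chordality fact that a common neighbor of the endpoints must be adjacent to the whole path and that these common neighbors form a clique, but yours avoids invoking simplicial vertices of $G$ and maximality, which is arguably cleaner. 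On sufficiency, the paper's proof is a one-step argument: $G[S]$ is chordal, so every vertex lies on an induced path between two simplicial vertices of $G[S]$, and $P_4$-freeness forces such a path to have length at most two, so $S=I_{l^2}[\mathit{Ext}_{l^2}(S)]$ immediately; you instead induct on $|S|$ using the trivially perfect decomposition (universal clique $K$ plus a disconnected remainder $T_1,\dots,T_k$), a genuinely different, decomposition-based route that trades the chordal ``simplicial path cover'' fact for Wolk's universal-vertex characterization (or the join structure of connected cographs combined with chordality). Your inductive scheme does go through, but when writing it up you should make two points explicit: first, capturing $K$ suffices only because any extreme point of $T_j\cup K$ lying in $T_j$ has the same neighborhood in $G[T_j\cup K]$ as in $G[S]$ (its only neighbors outside $T_j$ are in $K$), hence is already an extreme point of $S$, so $\mathit{Ext}_{l^2}(T_j\cup K)\subseteq \mathit{Ext}_{l^2}(S)\cup K\subseteq \mathit{hull}_{l^2}(\mathit{Ext}_{l^2}(S))$ and monotonicity of the hull yields $T_j\cup K\subseteq \mathit{hull}_{l^2}(\mathit{Ext}_{l^2}(S))$; second, the disconnectedness of $G[S\setminus K]$ when $G[S]$ is connected and not complete deserves the short justification that a universal vertex of a connected nonempty remainder would be universal in $G[S]$, contradicting its exclusion from $K$. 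These are presentation gaps, not mathematical ones.
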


\begin{proof} Let $G$ be an $l^2$-convex geometry. Note that the extreme points of $V(G)$ are the simplicial vertices of $G$.

Suppose that $G$ contains an induced subgraph $C$ isomorphic to $C_n$, for $n>3$. It is clear that $\mathit{hull}(V(C))$ is a convex set of $G$, and so there exists a set $S \subseteq V(C)$ of extreme points of $\mathit{hull}(V(C))$ such that $\mathit{hull}(V(C))=\mathit{hull}(S)$. Since $C$ does not have simplicial vertices, by Proposition~\ref{prop:simplicial} the set $\mathit{hull}(V(C))$ does not have extreme points, a contradiction. Hence $G$ is a chordal graph.

Now, we prove that $G$ is a $P_4$-free graph. Let $P=x_0,\ldots,x_p$ be a maximum induced path of $G$, between two simplicial vertices of $G$. Such a path exists because $G$ is chordal~\cite{dirac}. Assume, in order to obtain a contradiction, that $p\geq 3$.

Note that $\mathit{hull}(V(P))$ is a convex set of $G$ with exactly two extreme points $x_0$ and $x_p$. Moreover if there exists $z\in \mathit{hull}(V(P))\setminus V(P)$, which is captured by $\{x_0,x_p\}$ with an induced path of length two between $x_0$ and $x_p$, $z$ must be adjacent to every $x_i$; otherwise, $P+x_0z+zx_p$ is an induced cycle of $G$ with at least four vertices. Since $x_0$ and $x_p$ are simplicial vertices of $G[\mathit{hull}(V(P))]$, $N[x_0]$ and $N[x_p]$ are cliques. Thus,

\[I_{l^2}[V(P)]=(N[x_0]\cap N[x_p])\cup\{x_0,x_p\}\]

\centerline{and}

\vspace{-0.6cm}

\[I^2_{l^2}[V(P)]=I_{l^2}[(N[x_0]\cap N[x_p])\cup\{x_0,x_p\}]=(N[x_0] \cap N[x_p])\cup\{x_0,x_p\}.\]

No vertex in $V(P)$ is captured by $N[x_0]\cap N[x_p]$, and so $\mathit{hull}(V(P))\neq \mathit{hull}(\{x_0,x_p\})$, a contradiction since $\mathit{hull}(V(P))$ is a convex set of a convex geometry. This concludes the first part of the proof.

\smallskip

Conversely, let $G$ be a chordal $P_4$-free graph, and let $S\subseteq V(G)$ be an $l^2$-convex set of $G$. Since $G[S]$ is a chordal graph, every vertex in $S$ lies in an induced path between two simplicial vertices of $G[S]$ (extreme points of $S$). Such a path must have length two, otherwise $G$ contains $P_n$ $(n\geq 4)$ as an induced subgraph. This means that $S$ is the convex hull of its extreme points, and the proof is complete.
\end{proof}

The following example shows that the statement of Theorem~\ref{thm:l2} is not true if we replace ``$l^2$-convex geometry/chordal $P_4$-free'' by ``$l^3$-convex geometry/chordal $P_5$-free''. Let $G$ be the chordal graph in Figure~\ref{figure-example}.


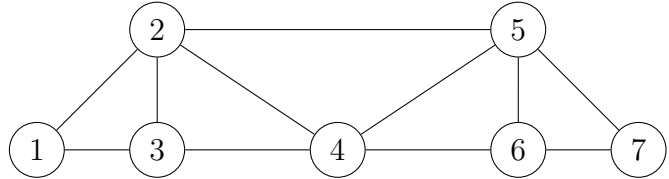
\begin{figure}[ht]
\begin{center}
\begin{tikzpicture}[scale=0.8]
    \node[draw,circle] (6) at (4,0) {$1$};
 \node[draw,circle] (7) at (6,0) {$3$};
  \node[draw,circle] (12) at (9,0) {$4$};
    \node[draw,circle] (8) at (12,0) {$6$};
   \node[draw,circle] (9) at (14,0) {$7$};
   \node[draw,circle] (10) at (6,2) {$2$};
   \node[draw,circle] (11) at (12,2) {$5$};
     \draw (6)--(7)--(12)--(8)--(9)--(11)--(8);
   \draw (12)--(10)--(11)--(12);
    \draw (7)--(10)--(6);
\end{tikzpicture}\end{center}
\caption{A chordal $l^3$-convex geometry that is not $P_5$-free.} \label{figure-example}
\end{figure}

Note that $\mathit{Ext}_{l^3}(V(G))=\{1,7\}$ but $I_{l^3}[\{1,7\}]=\{1,2,5,7\}\neq V(G)$. Two iterations are necessary for the set $\{1,7\}$ to capture all the vertices of $G$, i.e.,
\[I^2_{l^3}[\{1,7\})]=\mathit{hull}_{l^3}(\{1,7\})=V(G).\]

Up to symmetries, the nontrivial $l^3$-convex sets of $G$ are:
\[\{1,2,3,4\},\{1,2,3,4,5\},\{1,2,3,4,5,6\},\{2,3,4,5\},\{2,3,4,5,6\}.\]

It is a tedious but straightforward task to check that each set $S$ above satisfies $\mathit{hull}_{l^3}(\mathit{Ext}_{l^3}(S))=S$. Thus, $G$ is a chordal $l^3$-convex geometry. However, $G$ is not $P_5$-free. In order to characterize $l^3$-convex geometries, we need another properties, as we shall see in the next section.

\section{A characterization of $l^3$-convex geometries}

In this section we provide a characterization of $l^3$-convex geometries by means of a special property that must be satisfied by its induced gems with at least six vertices. Before stating the main result of this section, we need additional terminology.

Let $G$ be a graph. We say that an induced sugraph $H$ of $G$ is a {\em convex subgraph} if $V(H)$ is an $l^3$-convex subset of $G$. Let $P$ be an induced path and $x\in V(G)$ be a vertex; we say that {\em $P$ avoids $x$} if $x\notin V(P)$. Hereafter, an induced path with length three is simply referred as a $P_4$.

Let $G_n (n\geq 4)$ be an $n$-gem with vertices $x_0,\ldots,x_n,u_n$ as in Figure~\ref{fig0}, and assume that $G_n$ is an induced subgraph of $G$. We say that $G_n$ is {\em solved} if there exists in $G$ a $P_4$ connecting $x_0$ and $x_n$ that avoids $u_n$.


Before stating Theorem~\ref{thm:l3}, we need the following lemma:

\begin{lemma}\label{lem:diam3}
If $G$ is a graph with $\mathit{diam}(G)\leq 3$ then every connected $l^3$-convex subgraph $H$ of $G$ satisfies $\mathit{diam}(H)\leq 3$.
\end{lemma}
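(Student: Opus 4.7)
The plan is to prove the lemma directly by showing that every pair $u,v \in V(H)$ satisfies $d_H(u,v) \leq 3$. Fix such a pair. Since $\mathit{diam}(G)\leq 3$, choose a shortest $u$-$v$ path $P$ in $G$; its length is at most $3$, and because $P$ is a shortest path in $G$ it is automatically induced. Now the hypothesis that $V(H)$ is $l^3$-convex says precisely that $I_{l^3}[u,v]\subseteq V(H)$, and by the definition of $I_{l^3}[u,v]$ every vertex lying on any induced $u$-$v$ path in $G$ of length at most $3$ is captured. In particular, the (at most two) internal vertices of $P$ all lie in $V(H)$.

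The next step is to transfer $P$ from $G$ to $H$. Since the notion of ``convex subgraph'' introduced just above the lemma takes $H$ to be the induced subgraph $G[V(H)]$, any edge of $G$ whose endpoints both lie in $V(H)$ is automatically an edge of $H$. Applying this to the edges of $P$ gives that $P$ itself is a $u$-$v$ path in $H$ of length at most $3$, so $d_H(u,v)\leq 3$. Taking the maximum over all pairs $u,v\in V(H)$ yields $\mathit{diam}(H)\leq 3$.

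I do not expect any real obstacle: the argument rests only on the two standard facts that shortest paths are induced and that induced subgraphs inherit all adjacencies between their vertices, and on the literal definition of $l^3$-convexity. The connectedness hypothesis on $H$ is needed only to guarantee that $\mathit{diam}(H)$ is well-defined; in fact the construction above re-derives connectedness as a by-product, by exhibiting a path of length at most $3$ between any two vertices of $H$.
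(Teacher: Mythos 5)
Your proof is correct and is essentially the paper's argument in contrapositive form: the paper assumes $\mathit{dist}_H(u,v)>3$, takes a short induced $u$-$v$ path in $G$, and derives a contradiction from $l^3$-convexity, while you run the same facts (shortest paths are induced, convexity forces their vertices into $V(H)$, induced subgraphs keep the edges) directly to get $d_H(u,v)\leq 3$. No gap; your direct version even makes explicit the step the paper labels ``clearly.''
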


\begin{proof}
Suppose $\mathit{diam}(H)=d>3$ for some connected convex subgraph $H$ of $G$, and let $u,v\in V(H)$ such that $\mathit{dist}_H(u,v)=d>3$. Since $\mathit{dist}_G(u,v)\leq 3$, there must exist an induced path $Q$ in $G$ between $u$ and $v$ of length at most three. Clearly, there exists a vertex $x\in V(Q)$ such that $x\notin V(H)$. However, $x\in I[u,v]\subseteq V(H)$, because $H$ is a convex subgraph. This is a contradiction. Therefore, $\mathit{diam}(H)\leq 3$.
\end{proof}

\begin{theorem}\label{thm:l3}
A graph $G$ is an $l^3$-convex geometry if and only if the following conditions hold:

\begin{enumerate}

\item $G$ is chordal;

\item $\mathit{diam}(G)\leq 3$;

\item every induced $n$-gem $(n\geq 4)$ contained in $G$ is solved.
\end{enumerate}

\end{theorem}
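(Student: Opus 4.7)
The plan is to establish both directions separately. For the forward direction, suppose $G$ is an $l^3$-convex geometry. I would verify the three necessary conditions by exhibiting, for each potential violation, an $l^3$-convex set whose hull of extreme points is strictly smaller than the set itself. Chordality follows from a hole argument analogous to the one in Theorem~\ref{thm:l2}: if an induced $C_k$ with $k\geq 4$ exists, then in $\mathit{hull}_{l^3}(V(C_k))$ no cycle vertex is simplicial (Proposition~\ref{prop:simplicial}), forcing a convex set without the extreme points needed to regenerate it. For the diameter bound, I would choose $u,v$ at distance $\geq 4$ and use the hull of a shortest $u$-$v$ path to exhibit interior vertices that cannot be captured from its extreme points, because no induced path of length at most $3$ bridges the required gap.

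For the gem condition, I would take an unsolved induced $n$-gem $G_n$ with $n\geq 4$ and build an $l^3$-convex set $S\supseteq V(G_n)$ whose only extreme points meeting $V(G_n)$ are $x_0$ and $x_n$. The unsolved hypothesis says every induced $P_4$ from $x_0$ to $x_n$ must pass through $u_n$; combined with the observation that every induced length-$2$ path between $x_0$ and $x_n$ uses only their common neighbors, iterating $I_{l^3}[\cdot]$ from $\{x_0,x_n\}$ together with their common neighbors never reaches $x_1,\ldots,x_{n-1}$, which witnesses the failure of $S=\mathit{hull}_{l^3}(\mathit{Ext}_{l^3}(S))$.

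For the converse, assume the three conditions and let $S$ be any $l^3$-convex set. I would argue by induction on $|S|$, using that $G[S]$ is chordal (so it has simplicial vertices, which are extreme points of $S$), that $\mathit{diam}(G[S])\leq 3$ by Lemma~\ref{lem:diam3}, and that every induced gem of $G[S]$ remains solved in $G$. The goal is to show that every non-simplicial vertex of $G[S]$ lies on an induced path of length at most $3$ between two already-captured vertices: the chordal structure and the diameter bound produce an abundance of short induced paths, while the solved-gem hypothesis furnishes the missing $P_4$s precisely where $G[S]$ contains an induced $n$-gem with $n\geq 4$.

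The main obstacle I foresee is the converse direction, for two intertwined reasons. First, the solving $P_4$ guaranteed by hypothesis for a gem inside $G[S]$ may use vertices outside $S$; one must either argue that those vertices are captured through other extreme points of $S$ or exhibit an alternative capturing route internal to $S$. Second, arranging the induction so that every vertex of $S$ is provably reached by some $I_{l^3}^j[\mathit{Ext}_{l^3}(S)]$ will require a careful structural case analysis, presumably based on the clique tree of $G[S]$ and on the position of each vertex relative to the simplicial ones and to the gems it participates in.
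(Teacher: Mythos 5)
Your necessity argument follows the paper's proof in all essentials: chordality via the hull of a hole having no extreme points (Proposition~\ref{prop:simplicial}), the diameter bound via the hull of a long induced path whose only candidate extreme points are its two endpoints (your variant with an arbitrary pair at distance at least four works just as well as the paper's choice of simplicial vertices), and the gem condition by noting that vertices captured along length-two paths between $x_0$ and $x_n$ are universal to the gem's path and therefore cannot help capture the internal $x_i$'s, so a $P_4$ avoiding $u_n$ must exist. That half is fine.

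The converse, however, is only a plan, and what you defer is precisely where all the work lies. Your first foreseen obstacle is in fact a non-issue: since $S$ is $l^3$-convex and contains the endpoints $x_0,x_n$ of any induced gem of $G[S]$, every induced path of length three in $G$ between $x_0$ and $x_n$ --- in particular the solving $P_4$ --- lies inside $I_{l^3}[x_0,x_n]\subseteq S$, indeed inside $\mathit{hull}_{l^3}(\{x_0,x_n\})$, so no ``alternative capturing route internal to $S$'' has to be constructed. But your second obstacle is the substance of the theorem, and the proposal contains no argument for it. The paper resolves it with three claims plus an assembly step: if $G[S]$ contains no $n$-gem $(n\geq 4)$, every vertex lies on an induced path of length at most four between simplicial vertices of $G[S]$ (Claim~\ref{claim1}, proved by applying Lemma~\ref{lem:diam3} to $\mathit{hull}(V(P))$ and analyzing the extremal indices $i,j$ of the neighbors of a crossing $P_4$ to force $n\leq 4$), and such vertices are then captured from $\mathit{Ext}(S)$ (Claim~\ref{claim2}); if $G[S]$ does contain gems, an induction on the gem length $n$ --- not on $|S|$ --- shows that once a gem's two simplicial vertices are captured, the solving $P_4$ together with the sub-gems cut out by the indices $i,j$ captures all its vertices (Claim~\ref{claim3}); finally, an arbitrary vertex outside $I[\mathit{Ext}(S)]$ is treated by taking the hull of the long path through it, whose diameter is at most three by Lemma~\ref{lem:diam3}, and reducing either to the gem case (diameter two) or to the crossing-$P_4$ case (diameter three). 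None of this structure --- in particular the induction on gem length and the $i,j$ case analysis, which is where the solved-gem hypothesis is actually consumed --- appears in your sketch, and your proposed induction on $|S|$ offers no evident handle on it, since removing an extreme point changes the extreme-point set of the remaining convex set unpredictably. As written, the sufficiency direction is a statement of intent with the genuine difficulties left open.
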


\begin{proof}
Suppose that $G$ is an $l^3$-convex geometry. Recall that the extreme points of $V(G)$ are its simplicial vertices.


First, we prove that $G$ is a chordal graph. Assume by contradiction that $G$ contains an induced cycle with at least four vertices. It is clear that $\mathit{hull}(V(C))$ is a convex set of $G$, and, since $G$ is a convex geometry, there exists a set $S \subseteq V(C)$ of extreme points of $\mathit{hull}(V(C))$ such that $\mathit{hull}(V(C))=\mathit{hull}(S)$. Since $C$ does not contain simplicial vertices, by Proposition~\ref{prop:simplicial} the set $\mathit{hull}(V(C))$ does not contain extreme points as well, a contradiction. Hence $G$ is a chordal graph.

Now, we prove that $\mathit{diam}(G)\leq 3$. Assume, in order to obtain a contradiction, that $\mathit{diam}(G)>3$. Since $G$ is chordal, there exist two simplicial vertices $x_0,x_n\in V(G)$ such that $d(x_0,x_n)=\mathit{diam}(G)$. Let $P=x_0,x_1,\ldots,x_n$ be an induced path whose length is $\mathit{diam}(G)$. Note that $\mathit{hull}(V(P))$ is a convex set of $G$ with exactly two extreme points, $x_0$ and $x_n$. Since every induced path between $x_0$ and $x_n$ in $G$ is of length greater than or equal to four, $\{x_0,x_n\}$ cannot capture, with induced paths of length less than or equal to three, any vertex of $\mathit{hull}(V(P))$. Thus, $\mathit{hull}(V(P))\neq\mathit{hull}(\{x_0,x_n\})$, which is a contradiction since $\mathit{hull}(V(P))$ is a convex set of a convex geometry. Hence, $\mathit{diam}(G)\leq 3$.

Finally, suppose that $G$ contains $G_n$ $(n\geq 4)$, as in Figure~\ref{fig0}. Clearly, $\mathit{hull}(V(G_n))$ is a convex set of $G$. As $G$ is a convex geometry and $G_n$ contains exactly two simplicial vertices, by Proposition~\ref{prop:simplicial} we conclude that $\mathit{hull}(V(G_n))=\mathit{hull}(\{x_0,x_n\})$. Clearly, $u_n$ is captured by an induced path $P$ of length at most 3 between $x_0$ and $x_n$, namely $P=x_0,u_n,x_n$. Since $u_n$ is a neighbor of $x_0$ and $x_n$, no vertex $x_i$ for $i \neq 0,n$ can be captured by $\{x_0,x_n,u_n\}$. Moreover, no vertex $x\notin V(G_n)$ lying in an induced path of length two between $x_0$ and $x_n$ in $H=G[\mathit{hull}(V(G_n))]$ can be added to $\{x_0,x_n,u_n\}$ in order to capture a vertex $x_i$ $(i \neq 0,n)$, because $H$ is chordal and thus $x$ would necessarily be a neighbor of $u_n$ and of every $x_i$.

From the above arguments, there must exist at least one $P_4$ between $x_0$ and $x_n$, say $P'$, such that $\{x_0, x_n\}$ captures, with $P'$, some $x_i$ ($i\neq 0,n$) or vertices that will be used to capture the $x_i$'s in subsequent iterations of the computation of $\mathit{hull}(V(G_n))$. It is clear that $P'$ avoids $u_n$. Therefore, $G_n$ is solved, and this concludes the first part of the proof.

\smallskip

Conversely, suppose that conditions 1, 2, and 3 are satisfied. We have to prove that every convex set $S\subseteq V(G)$ is the convex hull of its extreme points.

First, we show that $S$ contains extreme points. Assume $|S|>1$ (otherwise, the proof is trivial). Clearly, the convex subgraph $H=G[S]$ is a chordal graph, and thus contains at least two simplicial vertices (i.e., extreme points of $S$). In addition, every vertex of $H$ lies in an induced path between two simplicial vertices.

Every vertex of $H$ lying in an induced path of length at most 3 between two simplicial vertices of $H$ trivially belongs to $\mathit{hull}(\mathit{Ext}(V(H)))$. Assume then there exists a vertex $x\in V(H)$ that does not lie in an induced path of length at most 3 between two simplicial vertices of $H$. We need to prove that, in this case, $x\in \mathit{hull}(\mathit{Ext}(V(H)))$ as well.

We consider two cases: $H$ contains $G_n$ $(n\geq 4)$ or not. The following claims deal with the two cases.

\begin{claim}\label{claim1} If $H$ does not contain $G_n$ $(n\geq 4)$ then $x$ lies in an induced path of $H$ of length at most four between two simplicial vertices of $H$.
\end{claim}

\begin{proof}
Suppose that every induced path of $H$ between simplicial vertices containing $x$ as an internal vertex is of length $n>4$, and let $P=x_0,\ldots,x_n$ be a minimal induced path satisfying this condition.

Note that $\mathit{hull}(V(P))$ is completely contained in $H$ and it is a convex set of $G$. On the other hand, its only extreme points are $x_0$ and $x_n$. Since $\mathit{hull}(V(P))$ is convex, by Lemma~\ref{lem:diam3} $G[\mathit{hull}(V(P))]$ has diameter less than or equal to three. Thus, $d_H(x_0,x_n)\leq 3$.


Suppose $d_H(x_0,x_n)=2$. Clearly, there exists an induced path $Q=x_0,u_n,x_n$ with $u_n$ not in $P$ (Figure \ref{fig1}).

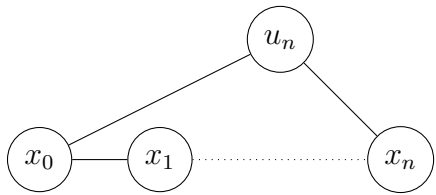
\begin{figure}[ht]
\tikzstyle{miEstilo}= [thin, dotted]
\begin{center}
\begin{tikzpicture}[scale=0.8]
\node[draw,circle] (1) at (-4,0) {$x_0$};
 \node[draw,circle] (2) at (-2,0) {$x_1$};
  \node[draw,circle] (3) at (2,0) {$x_n$};
   \node[draw,circle] (4) at (0,2) {$u_n$};
   \draw[miEstilo]  (2)--(3);
   \draw (1)--(4);
   \draw (4)--(3);
    \draw (1)--(2);
\end{tikzpicture}\end{center}
\caption{Induced path $Q$.} \label{fig1}
\end{figure}

Since $G[\mathit{hull}(V(P))]$ is chordal, $Q\cup P$ is not an induced cycle. Thus $u_n$ is adjacent to every $x_i$, $i=1,\ldots,n-1$. But then $Q\cup P=G_n (n>4)$, a contradiction.

Hence, $d_H(x_0,x_n)=3$. Let $Q'=x_0,y_1,y_2,x_n$ be an induced path of $G[\mathit{hull}(V(P))]$ with length three. Note that $|V(Q') \cap V(P)|\leq 3$ (Figure \ref{fig2}).

\begin{figure}[ht]
\tikzstyle{miEstilo}= [thin, dotted]
\begin{center}
\begin{tikzpicture}[scale=0.65]
\node[draw,circle] (1) at (-4,0) {$x_0$};
 \node[draw,circle] (2) at (-2,0) {};
  \node[draw,circle] (3) at (2,0) {};
   \node[draw,circle] (4) at (4,0) {$x_n$};
   \node[draw,circle] (5) at (0,2) {$y_1$};
   \draw[miEstilo]  (2)--(3);
   \draw (1)--(5)--(3);
   \draw (3)--(4);
    \draw (1)--(2);
    \node[draw,circle] (6) at (8,0) {$x_0$};
 \node[draw,circle] (7) at (10,0) {};
  \node[draw,circle] (8) at (12,0) {};
   \node[draw,circle] (9) at (14,0) {$x_n$};
   \node[draw,circle] (10) at (10,2) {$y_1$};
   \node[draw,circle] (11) at (12,2) {$y_2$};
   \draw[miEstilo]  (7)--(8);
   \draw (7)--(6)--(10)--(11);
   \draw (11)--(9);
    \draw (9)--(8);
\end{tikzpicture}\end{center}
\caption{Possible configurations for $Q'$ and $P$.} \label{fig2}
\end{figure}
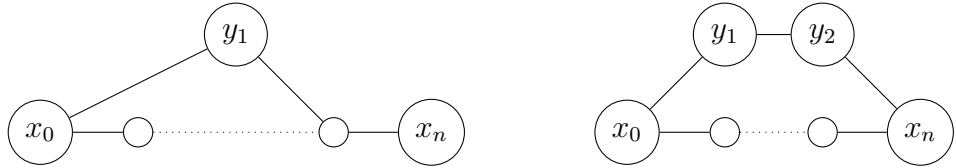

If $|V(Q')\cap V(P)|=3$, assume without loss of generality that $y_2=x_{n-1}$. Since $P$ and $Q'$ are induced paths and $G[\mathit{hull}(V(P))]$ is a chordal graph, it follows that $P[x_0,x_{n-1}] \cup Q'$ is not an induced cycle. Thus $y_1$ is adjacent to every $x_i$ lying in $P[x_0,x_{n-1}]\cup Q$. But then
$P[x_0,x_{n-1}]\cup Q'=G_{n-1}$ for $n-1>3$, a contradiction.

Now we analyze the case $|V(Q')\cap V(P)|=2$. Since $x_0$ and $x_n$ are simplicial vertices in $G[\mathit{hull}(V(P))]$, $x_1$ is adjacent to $y_1$ and $x_{n-1}$ is adjacent to $y_2$ (Figure \ref{fig3}). Let $Q''$ be the path $Q''=x_1,y_1,y_2,x_{n-1}$.

\begin{figure}[ht]
\tikzstyle{miEstilo}= [thin, dotted]
\begin{center}
\begin{tikzpicture}[scale=0.8]
    \node[draw,circle] (6) at (8,0) {$x_0$};
 \node[draw,circle] (7) at (10,0) {};
  \node[draw,circle] (8) at (12,0) {};
   \node[draw,circle] (9) at (14,0) {$x_n$};
   \node[draw,circle] (10) at (10,2) {$y_1$};
   \node[draw,circle] (11) at (12,2) {$y_2$};
   \draw[miEstilo]  (7)--(8);
   \draw (10)--(7)--(6)--(10)--(11);
   \draw (8)--(11)--(9);
    \draw (9)--(8);
\end{tikzpicture}\end{center}
\caption{Path $Q''$.} \label{fig3}
\end{figure}


Since $G[\mathit{hull}(V(P))]$ is a chordal graph, $P[x_1,x_{n-1}]\cup Q''$ is not an induced cycle. Then $y_1$ or $y_2$ is adjacent to $x_i$ for $i\notin\{1,n-1\}$. Moreover, there exists at least one index $l \in \{1,..,n-1\}$ such that $x_l$ is adjacent to $y_1$ and $y_2$. Let $i$ be the minimum index such that $y_2$ is adjacent to $x_i$, and $j$ be the maximum index such that $y_1$ is adjacent to $x_j$ (Figure \ref{fig4}). Clearly, $i\leq j$, otherwise $G[\{y_1,y_2,x_i,\ldots,x_j\}]$ is an induced cycle of size at least 4.

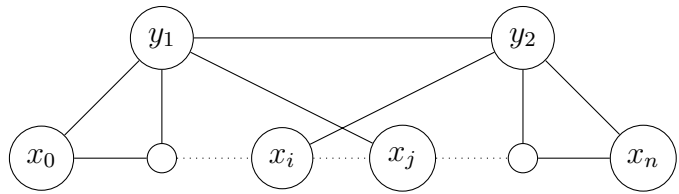
\begin{figure}[ht]
\tikzstyle{miEstilo}= [thin, dotted]
\begin{center}
\begin{tikzpicture}[scale=0.8]
    \node[draw,circle] (6) at (4,0) {$x_0$};
 \node[draw,circle] (7) at (6,0) {};
  \node[draw,circle] (12) at (8,0) {$x_i$};
   \node[draw,circle] (14) at (10,0) {$x_j$};
  \node[draw,circle] (8) at (12,0) {};
   \node[draw,circle] (9) at (14,0) {$x_n$};
   \node[draw,circle] (10) at (6,2) {$y_1$};
   \node[draw,circle] (11) at (12,2) {$y_2$};
   \draw[miEstilo]  (7)--(12)--(14)--(8);
   \draw (14)--(10)--(7)--(6)--(10)--(11)--(12);
   \draw (8)--(11)--(9);
    \draw (9)--(8);
\end{tikzpicture}\end{center}
\caption{Vertices $x_i$ and $x_j$.} \label{fig4}
\end{figure}

Note that $y_2$ is adjacent to $x_h$ for all $h\geq i$, and $y_1$ is adjacent to $x_m$ for all $m \leq j$, otherwise there exists an
induced cycle of size at least 4.

Note that $j<4$ and $i>n-4$, otherwise $G[\mathit{hull}(V(P))]$ would contain a $j$-gem $(j\geq 4)$ or an $(n-i)$-gem $(n-i\geq 4)$, which is impossible.

We show that $n<7$. Assume by contradiction that $n\geq 7$. Thus $n-4\geq 3$, and this implies $i>n-4\geq 3$. But this is impossible because $j<4$ and $i\leq j$. Hence $n<7$.

Assume $n=6$. Since $i>n-4$ and $j<4$, if $i\neq 3$ then $i\geq 4>j$, a contradiction since $i\leq j$. Thus, $i=3$. Analogously, $j=3$. But then $G[\{x_0,x_1,x_2,x_3,y_2,y_1\}]=G_4$, which is impossible.

Assume now $n=5$, and suppose that $j=i$. Since $i>n-4$ and $i\leq j$, it follows that $j=2$ or $j=3$, and then either $G[\{y_1,y_2,x_2,x_3,x_4,x_5\}]=G_4$ or $G[\{y_1,y_2,x_0,\ldots,x_3\}]=G_4$, a contradiction. If $i<j$ then, since $j<4$, $i=2$ and $j=3$. Thus $x$ is an internal vertex either of the induced path $x_0,x_1,x_2,y_2,x_5$ or the induced path  $x_0,y_1,x_3,x_4,x_5$, contradicting the choice of $P$.

Hence $n\leq 4$ and $x$ lies in an induced path of $H$ of length at most four between two simplicial vertices of $H$.
\end{proof}

\begin{claim}\label{claim2} If $H$ does not contain $G_n$ $(n\geq 4)$ then $x$ is captured by $\mathit{Ext}(V(H))$.
\end{claim}

\begin{proof}
By Claim \ref{claim1}, if $x\notin I[\mathit{Ext}(V(H))]$ then there exists in $H$ an induced path $P=x_0,\ldots,x,\ldots,x_4$ of length four such that $x_0$ and $x_4$ are simplicial vertices of $H$. Since $\mathit{hull}(V(P))\subseteq V(H)$ is a convex set of $G$, by Lemma~\ref{lem:diam3} the diameter of $G[\mathit{hull}(V(P))]$ is less than or equal to three. Also, it does not contain $G_n (n\geq 4)$, because it is an induced subgraph of $H$.

Note that the diameter of $G[\mathit{hull}(V(P))]$ is exactly three (since it is a chordal graph not containing $G_n \ (n\geq 4)$). In addition, $x_0$ and $x_4$ are the only simplicial vertices of $G[\mathit{hull}(V(P))]$.  Then  there exists in $G[\mathit{hull}(V(P))]$ an induced path $Q=x_0,y_1,y_2,x_4$ of length three. Observe that $P$ and $Q$ can have at most three vertices in common and the extreme points of $\mathit{hull}(V(P))$ are also extreme points of $H$.

If $|V(Q)\cap V(P)|=3$, without loss of generality assume that $y_2=x_3$. Since $x\notin I[\mathit{Ext}(V(H))]$, we have $x_3\neq x$. Note that $x_3\in I[\mathit{Ext}(\mathit{hull}(V(P)))]$ (since the length of the induced path $x_0,y_1,x_3,x_4$ is three).

On the other hand, $x$ is an internal vertex of the induced path $x_0,x_1,x_2,x_3$ and  $x_3\in I[\mathit{Ext}(\mathit{hull}(V(P))]$. Thus, $x\in I^2[\mathit{Ext}(\mathit{hull}(V(P))]$. Since the extreme points of $\mathit{hull}(V(P))$ are also extreme points of $H$, it follows that $x\in I^{m}[\mathit{Ext}(V(H))]$ with $m<4$.

Now suppose $|V(Q)\cap V(P)|=2$. Since $x_0$ and $x_4$ are simplicial vertices of $H$, we have that $y_1$ is adjacent to $x_1$ and $y_2$ is adjacent to $x_3$. On the other hand, $G[\mathit{hull}(V(P))]$ is a chordal graph, and thus $x_1,y_1,y_2,x_3,x_2,x_1$ is not an induced cycle. Thus, $y_1$ and $y_2$ are simultaneously adjacent to at least one vertex of $P$. By hypothesis, since $G_n$ ($n\geq 4$) is not an induced subgraph of $H$, both $y_1$ and $y_2$ are adjacent to $x_2$ (Figure \ref{fig5}).

\begin{figure}[ht]
\tikzstyle{miEstilo}= [thin, dotted]
\begin{center}
\begin{tikzpicture}[scale=0.8]
    \node[draw,circle] (6) at (4,0) {$x_0$};
 \node[draw,circle] (7) at (6,0) {$x_1$};
  \node[draw,circle] (12) at (9,0) {$x_2$};
    \node[draw,circle] (8) at (12,0) {$x_3$};
   \node[draw,circle] (9) at (14,0) {$x_4$};
   \node[draw,circle] (10) at (6,2) {$y_1$};
   \node[draw,circle] (11) at (12,2) {$y_2$};
   \draw[miEstilo]  (10)--(8);
   \draw[miEstilo]  (11)--(7);
   \draw (6)--(7)--(12)--(8)--(9)--(11)--(8);
   \draw (12)--(10)--(11)--(12);
    \draw (7)--(10)--(6);
\end{tikzpicture}\end{center}
\caption{$x_1$ may or may not be adjacent to $y_2$, and $x_3$ may or may not be adjacent to $y_1$.} \label{fig5}
\end{figure}
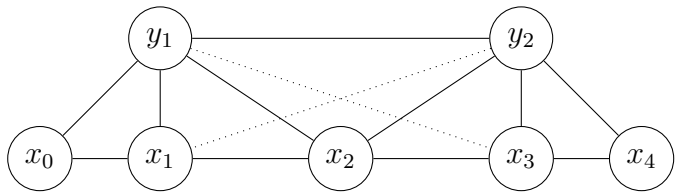

Note that $x_1$ may be adjacent to $y_2$, and $x_3$ may be adjacent to $y_1$. It is easy to see that $x$ is captured by $\{x_0,y_2\}$ with the induced path $x_0,x_1,x_2,y_2$ whenever $y_2$ is not adjacent to $x_1$, or by $\{x_4,y_1\}$ with the induced path $x_4,x_3,x_2,y_1$ if $y_1$ is not adjacent to $x_3$.

If $y_1$ is adjacent to $x_3$ or $y_2$ is adjacent to $x_1$ then $x$ is captured (depending on the position of $x$ in $P$) with one of the following induced paths: $x_0,x_1,y_2$; $x_4,x_3,y_1$; or $x_1,x_2,x_3$. In the last case, the following steps are needed: first, the set $\{x_0,x_4\}$ captures $y_1$ and $y_2$; next, $\{y_1, y_2\}$ captures $x_1$ and $x_3$; finally, $\{x_1,x_3\}$ captures $x=x_2$. Clearly,  $x\in I^{m}[\mathit{Ext}(V(P))]$ with $m=2$ or $m=3$. Hence, $x\in I^{m}[\mathit{Ext}(V(H))]$ with $m<4$. This completes the proof of the claim.

\end{proof}

\begin{claim}\label{claim3}
Suppose that $H$ contains $G_n$ $(n\geq 4)$, as in Figure~\ref{fig0}. Suppose also that the simplicial vertices of $G_n$ are captured by $\mathit{Ext}(V(H))$. Then every $x\in V(G_n)$ is captured by $\mathit{Ext}(V(H))$.
\end{claim}


\begin{proof}
Assume that $H$ contains $G_n$ $(n\geq 4)$ as in Figure~\ref{fig0}, and let $P$ be the induced path $P=x_0,x_1,\ldots,x_n$.

Note that $G[\mathit{hull}(V(G_n)]$ is a convex subgraph of $H$ with only two extreme points, $x_0$ and $x_n$. By hypothesis, $G_n$ is solved, and thus $G[\mathit{hull}(V(G_n))]$ contains an induced path $Q=x_0,y_1,y_2,x_n$ that avoids $u_n$.

Note that $P$ and $Q$ can have at most three vertices in common. Moreover, $y_1$ and $y_2$ are captured by $\{x_0,x_n\}$.

We prove the claim by induction on $n$.

\medskip

\noindent {\em Base case:} $n=4$

\medskip

If $Q=x_0,y_1,y_2,x_4$ and $P=x_0,x_1,x_2,x_3,x_4$ have three vertices in common, assume without loss of generality that  $y_2=x_3$. Note that $x_3$ is captured by $V(Q)$, and then every other vertex of $G_n$ is captured by $\{x_0,x_1,x_2,x_3\}$. Thus the claim follows in this case.

If $Q$ and $P$ have exactly two vertices in common then $y_1$ is adjacent to $x_1$ and $y_2$ is adjacent to $x_3$. In addition, both are adjacent to $u_4$. Since $G[\mathit{hull}(V(G_4))]$ is a chordal graph, $x_1,y_1,y_2,x_3,x_2,x_1$ is not an induced cycle, and since $P$ is an induced path there exists at least one vertex $x_l$, with $l\in\{1,2,3\}$, which is adjacent to both $y_1$ and $y_2$. See Figure~\ref{fig5}. Let $i,j\in\{1,2,3\}$ be such that $i$ is the minimum index for which $x_i$ is adjacent to $y_2$, and $j$ is the maximum index for which $x_j$ is adjacent to $y_1$. Note that $i\leq j$, otherwise
there exists an induced cycle of size at least four.

Since $n=4$, it is clear that $x_0,\ldots,x_i,y_2$ is an induced path of length at most three whenever $i\neq 3$. Similarly, $y_1,x_j,\ldots,x_4$ is an induced path of length at most three whenever $j\neq 1$.

If $i=3$ then $y_1$ is adjacent to $x_3$, and thus $\{y_1,x_4\}$ captures $x_3$. Next, $\{x_0,x_3\}$ captures the other vertices of $G_n$. We proceed analogously for $j=1$.

If $i\neq 3$ and $j\neq 1$, consider the following induced paths of length at
most three: $Q'=x_0,\ldots,x_i,y_2$ and $Q''=y_1,x_j,\ldots,x_4$. The existence of such paths show that $x_i$ and $x_j$ are captured by $\{x_0,y_2\}$ and $\{y_1,x_4\}$, respectively. Then the other vertices of $G_n$ are also captured, and the base case is complete.

\medskip

\noindent {\em Inductive step:} $n>4$

\medskip

Assume that, for every $l$-gem $G'$ with $4\leq l<n$ contained in $H$ whose simplicial vertices are captured by $\mathit{Ext}(V(H))$, all vertices of $G'$ are captured by $\mathit{Ext}(V(H))$.

By hypothesis, $G_n$ $(n>4)$ is solved, and then there exists an induced path $Q=x_0,y_1,y_2,x_n$ in $G[\mathit{hull}(V(G_n)]$ that avoids $u_n$.

If $Q$ and $P=x_0,x_1,\ldots,x_{n-1},x_n$ have three vertices in common, assume without loss of generality that $x_{n-1}\in V(Q)\cap V(P)$. In this case, $x_{n-1}$ is captured with the induced path $Q$. Let $G'$ be the $(n-1)$-gem induced by $V(G_n)\backslash\{u_n\}$. Since $x_0$ and $x_{n-1}$ are already captured and $x_{n-1}$ is a simplicial vertex of $G'$, by the induction hypothesis all the vertices of $G'$ are captured, and this implies that all the vertices of $G_n$ are captured by $\mathit{Ext}(V(H))$.

If $Q$ and $P$ have only two vertices in common, we have the following situation. Since $G[\mathit{hull}(V(G_n))]$ is a chordal graph, $x_1,y_1,y_2,$ $x_{n-1},x_{n-2},\ldots,x_2,x_1$ is not an induced cycle. Thus there exists at least one vertex $x_l$, with $l\in\{1,\ldots,n-1\}$, which is adjacent to $y_1$ and $y_2$. Let $i,j\in\{1,2,\ldots,n-1\}$ be such that $i$ is the minimum index for which $x_i$ is adjacent to $y_2$, and $j$ is the maximum index for which $x_j$ adjacent to $y_1$. Note that $i\leq j$. See Figure~\ref{fig6}).

\begin{figure}[ht]
\tikzstyle{miEstilo}= [thin, dotted]
\begin{center}
\begin{tikzpicture}[scale=0.8]
    \node[draw,circle] (6) at (4,0) {$x_0$};
 \node[draw,circle] (7) at (6,0) {$x_1$};
  \node[draw,circle] (12) at (8,0) {$x_i$};
    \node[draw,circle] (13) at (10,0) {$x_j$};
    \node[draw,circle] (8) at (12,0) {};
   \node[draw,circle] (9) at (14,0) {$x_n$};
   \node[draw,circle] (10) at (6,2) {$y_1$};
   \node[draw,circle] (11) at (12,2) {$y_2$};
    \draw (6)--(7);
  \draw[miEstilo] (7)--(12);
  \draw (9)--(11)--(8);
  \draw[miEstilo] (12)--(13);
  \draw (11)--(13);
  \draw[miEstilo] (13)--(8);
  \draw (8)--(9);
   \draw (12)--(10)--(11)--(12);
    \draw (7)--(10)--(6);
    \draw (10)--(13);
\end{tikzpicture}\end{center}
\caption{Inductive step: vertices $x_i$ and $x_j$.} \label{fig6}
\end{figure}
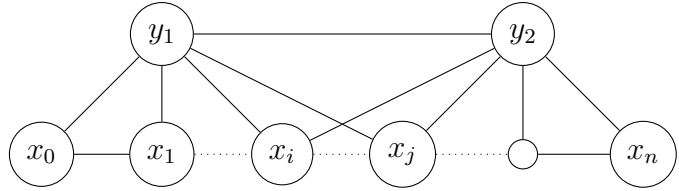

If $i=1$, $x_1$ is captured with the induced path $x_0,x_1,y_2$. Note that the vertices $x_1,x_2,\ldots,x_n, y_2$ induce an $(n-1)$-gem $G''$ whose simplicial vertices, $x_1$ and $x_n$, are already captured. By the induction hypothesis, all the vertices of $G''$, and thus of $G_n$, are captured by $\mathit{Ext}(V(H))$. We proceed analogously for $j=n-1$, by considering the $(n-1)$-gem induced by the vertices $y_1,x_0,\ldots,x_{n-1}$.

If $i>1$ and $j<n-1$, we analyze three cases:

\medskip

\noindent {\bf Case 1:} If $i>2$ and $j<n-2$, the gem induced by $\{x_0,x_1,\ldots,x_i,y_2,y_1\}$ (with simplicial vertices $x_0$ and $y_1$) and the gem induced by $\{y_1,y_2,x_j,\ldots,x_n\}$ (with simplicial vertices $x_n$ and $y_1$) are $l$-gems with $4\leq l<n$. Since the vertices $x_0,x_n,y_1,y_2$ have already been captured, by the induction hypothesis all vertices of such gems are captured in subsequent iterations. This means that vertices $x_1,\ldots,x_i$ and $x_j,\ldots,x_{n-1}$ are captured.

If $i=j$ then all vertices of $G_n$ are captured.

If $i<j$, either $Q'=x_i,x_{i+1},\ldots,x_{j-1},x_j$ is an induced path of length at most three, or the vertices of $Q'$ along with $u_n$ induce an $l$-gem $G'''$ ($4\leq l<n$) with simplicial vertices $x_i$ and $x_j$. In the latter case, by the induction hypothesis, all vertices of $G'''$ are captured. Thus, in either case, we conclude that all vertices of $G_n$ are captured.

\medskip

\noindent {\bf Case 2:} If $i=2$ and $j<n-2$ (or $i>2$ and $j=n-2$), vertices $x_1$ and $x_2$ are captured with the induced path $x_0,x_1,x_2,y_2$. Using similar arguments as above, the claim follows by considering the sets $S_1=\{y_1,y_2,x_j,\ldots,x_n\}$ and $S_2=\{x_2,x_3,\ldots,x_j\}$. In the former case, $S_1$ induces a gem with already-captured simplicial vertices $y_1$ and $x_n$. In the latter, $S_2$ either induces a $P_r$ with $r\leq 4$ or, along with $u_n$, an $l$-gem $(4\leq l<n)$ with already-captured simplicial vertices $x_2$ and $x_j$.

\medskip

\noindent {\bf Case 3:} If $i=2$ and $j=k-2$, we have that $x_1,x_2$ are captured with the induced path $x_0,x_1,x_2,y_2$, and $x_{n-2},x_{n-1}$ are captured with the induced path $y_1,x_{n-2},x_{n-1},x_n$.

If the induced path $x_2,x_3,\ldots,x_{n-2}$ has length at most three, vertices $x_3,\ldots,x_{n-3}$ are captured by $\{x_2,x_{n-2}\}$. Otherwise, the claim follows by considering the gem induced by $\{x_2,\ldots,x_{n-2},u_n\}$, where $x_2$ and $x_{n-2}$ are its already-captured simplicial vertices.

\end{proof}

Now, we conclude the proof that $V(H)=\mathit{hull}(\mathit{Ext}(V(H)))$. Recall that there is a vertex $x\in V(H)\setminus I(\mathit{Ext}(V(H)))$ such that $x$ lies in an induced path of length at least four $P=x_0,\ldots,x,\ldots,x_n$, where $x_0$ and $x_n$ are simplicial vertices of $H$. We need to prove that $x\in\mathit{hull}(\mathit{Ext}(V(H)))$.

Clearly, $\mathit{hull}(V(P))$ is contained in $V(H)$ and   $H'=G[\mathit{hull}(V(P))]$ is a chordal subgraph of $H$. Also, $H'$ is a convex subgraph of $G$, and thus by Lemma~\ref{lem:diam3} we have ${\mathit diam}(H')\leq 3$. Note that $x_0$ and $x_n$ are the only simplicial vertices of $H'$.

If ${\mathit diam}(H')=2$, there exists an induced path $x_0,u_n,x_n$. But then $u_n$ must be adjacent to all vertices of $P$, and the set $V(P)\cup\{u_n\}$ induces $G_n$ $(n\geq 4)$. By Claim~\ref{claim3}, all vertices of $G_n$ are captured by $\mathit{Ext}(V(H))$, and the proof follows.

If ${\mathit diam}(H')=3$, there exists an induced path $Q=x_0,y_1,y_2,x_n$ such that $Q$ and $P$ have at most three vertices in common.

If $|V(Q)\cap V(P)|=3$, assume $x_{n-1}\in V(Q)\cap V(P)$. Then $x_{n-1}$ is captured with $Q$. If $n=4$ then $x_1,x_2$ are captured by $\{x_0,x_3\}$, and the proof follows. If $n>4$, since $H'$ is chordal it follows that $y_1$ is adjacent to all vertices of $V(P)\setminus\{x_n\}$, and then $\{x_0,\ldots,x_{n-1},y_1\}$ induces an $l$-gem with $l\geq 4$ and already-captured simplicial vertices $x_0$ and $x_{n-1}$. Hence, by Claim~\ref{claim3}, the proof follows.

If $|V(Q)\cap V(P)|=2$, since $x_0$ and $x_n$ are simplicial vertices, we have that $y_1$ is adjacent to $x_1$ and $y_2$ is adjacent to $x_{n-1}$. Also, $x_1,y_1,y_2,x_{n-1},\ldots,x_1$ is not an induced cycle, and then there must exist $x_i$ simultaneously adjacent to $y_1$ and $y_2$. Following previous arguments, let $i,j \in \{1,2,\ldots,n-1\}$ be such that $i$ is the minimum index for which $x_i$ is adjacent to $y_2$, and $j$ the maximum
index for which $x_j$ adjacent to $y_1$. We have the situation illustrated in Figure~\ref{fig6}. The proof then follows by using the same argumentation as in Claim~\ref{claim3}.


Therefore, $V(H)=\mathit{hull}(\mathit{Ext}(V(H)))$ and $G$ is an $l^3$-convex geometry.

\end{proof}

\subsection{The class of $l^3$-convex geometries is not hereditary}
\label{sec:not-hered}

Observe that the graph $G$ depicted in Figure~\ref{figure-example} is a proper interval graph (and, thus, an interval graph and a chordal graph). Also, $G$ is weak polarizable (see~\cite{olariu}). In addition, for $a\in\{m,m^3,t,wt\}$, we have the following facts:

\begin{itemize}
\item $\mathit{\mathit{Ext}}_a(V(G))=\{1,7\}$;
\item $\mathit{\mathit{hull}}_a(\{1,7\})=I_a[\{1,7\}]=V(G)$;
\item $G-x$ is an $a$-convex geometry for every $x\in V(G)$.
\end{itemize}

\noindent However, for $x\in\{2,5\}$, $G'=G-x$ is not an $l^3$-convex geometry, since:

\begin{itemize}
\item $V(G')$ is trivially an $l^3$-convex set of $G'$,
\item $\mathit{\mathit{Ext}_{l^3}(V(G'))}=\{1,7\}$, and
\item $\mathit{\mathit{hull}}_{l^3}(\{1,7\})=\{1,7\}\neq V(G')$.
\end{itemize}

\noindent This shows that the class of $l^3$-convex geometries is not hereditary.

\section{Conclusions and future work}
\label{sec:conclu}

Characterizing $l^k$-convex geometries for $k\geq 4$ is an interesting open question. In order to tackle such a question, the following result will be useful:

\begin{proposition}
Let $k\geq 2$. If $G$ is an $l^k$-convex geometry then $G$ is a chordal graph with $\mathit{diam}(G)\leq k$.
\end{proposition}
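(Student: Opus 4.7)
The plan is to generalize the first two parts of the proof of Theorem~\ref{thm:l3} from $k=3$ to arbitrary $k\geq 2$. The key preliminary observation is that Proposition~\ref{prop:simplicial} extends verbatim to all $k\geq 2$: if $u\in I_{l^k}[W]\setminus W$, then $u$ is an internal vertex of some induced path $P$ of length at most $k$ between two vertices of $W$, so the two neighbors of $u$ on $P$ are non-adjacent in $G$ (since $P$ is induced). Hence $u$ is not simplicial in $G[I_{l^k}[W]]$. Iterating this argument on $I^j_{l^k}[W]$ shows that every extreme point of $\mathit{hull}_{l^k}(W)$ must already be a simplicial vertex of $G[W]$.

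For chordality, I would argue by contradiction: suppose $G$ contains an induced cycle $C$ of length at least four. Then $\mathit{hull}_{l^k}(V(C))$ is a convex set of $G$, but $C$ has no simplicial vertex, so by the extended version of Proposition~\ref{prop:simplicial} the set $\mathit{hull}_{l^k}(V(C))$ contains no extreme points. This contradicts the convex geometry property, so $G$ must be chordal.

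For the diameter bound, suppose $\mathit{diam}(G)>k$. Using the well-known fact that a chordal graph admits two simplicial vertices realizing its diameter (already invoked in the proof of Theorem~\ref{thm:l3}), pick simplicial $x_0,x_n$ with $d(x_0,x_n)=\mathit{diam}(G)$ and let $P=x_0,x_1,\ldots,x_n$ be an induced path realizing this distance. Then $\mathit{hull}_{l^k}(V(P))$ is convex and its only simplicial vertices, hence by the extended Proposition~\ref{prop:simplicial} its only possible extreme points, are $x_0$ and $x_n$. However every induced path between $x_0$ and $x_n$ in $G$ has length at least $\mathit{diam}(G)>k$, so $I_{l^k}[\{x_0,x_n\}]=\{x_0,x_n\}$ and therefore $\mathit{hull}_{l^k}(\{x_0,x_n\})=\{x_0,x_n\}\neq \mathit{hull}_{l^k}(V(P))$, contradicting the convex geometry property.

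The main obstacle I anticipate is the careful verification of the extension of Proposition~\ref{prop:simplicial} to arbitrary $k$: one must check that the two neighbors of $u$ along an induced $l^k$-path lie in $I_{l^k}[W]$ and witness non-simpliciality, and that this property propagates through all iterations $I^j_{l^k}$. Once this is established, the chordality and diameter arguments are essentially direct transcriptions of the corresponding arguments in the proof of Theorem~\ref{thm:l3}, with no genuinely new ideas required.
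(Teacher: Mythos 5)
Your proposal is correct and follows essentially the same route as the paper, which explicitly states that the proposition is proved by the same arguments as the necessity part of Theorem~\ref{thm:l3}: the cycle argument via the (straightforwardly extended) Proposition~\ref{prop:simplicial} for chordality, and the two-simplicial-vertices-at-diameter-distance argument showing $\mathit{hull}_{l^k}(V(P))\neq\mathit{hull}_{l^k}(\{x_0,x_n\})$ for the bound $\mathit{diam}(G)\leq k$. Nothing further is needed.
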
.

The proof of the above proposition uses the same arguments in the necessity proof of Theorem~\ref{thm:l3}.

Regarding complexity aspects, recognizing whether a graph $G$ is an $l^2$-convex geometry can be easily done in linear time by testing whether $G$ is a chordal cograph (see~\cite{corneil-et-al,rose-et-al}).

Finding, if possible, an efficient recognition algorithm for $l^3$-convex geometries amounts to finding an efficient test of condition 3 in Theorem~\ref{thm:l3}, for chordal graphs with diameter at most three.


\end{document}